\numberwithin{equation}{section}
\theoremstyle{plain}
\def\definedas{\stackrel{\Delta}{=}}
\def\:{:\,}\numberwithin{equation}{section}
\newtheorem{theorem}{Theorem}[section]
\newtheorem{lemma}[theorem]{Lemma}
\newtheorem{proposition}[theorem]{Proposition}
\newtheorem{corollary}[theorem]{Corollary}
\renewcommand{\d}{\partial}
\newcommand{\beq}{\begin{eqnarray}}
\newcommand{\eeq}{\end{eqnarray}}
\newcommand{\beqq}{\begin{eqnarray*}}
\newcommand{\eeqq}{\end{eqnarray*}}
\newcommand{\bc}{\begin{center}}
\newcommand{\ec}{\end{center}}
\def\real{{\mathbb{R}}}
\newcommand{\lips}{{\mathcal L}}
\def\definedas{\stackrel{\Delta}{=}}
\def\E{\mathbb E}
\def\And{A_n^d}
\def\Emnd{\mathbb E_{\mu_n^d}}
\def\Pmnd{\mathbb P_{\mu_n^d}}
\def\wN{\widehat N}
\def\Pdn{{\Phi^d_n}}
\def\kdn{{k^d_n}}
\def\Hdn{{\mathcal H^d_n}}
\def\smallhalf{\mbox{ $\frac{1}{2}$}}
\begin{document}
\title[Random spherical harmonics]{Critical radius and supremum of random spherical harmonics}
\author[Feng]{Renjie Feng}
\address{Beijing International Center for Mathematical Research, Peking University, Beijing, China}
\email{renjie@math.pku.edu.cn}

\author[Adler]{Robert J. Adler}
\address{Andrew and Erna Viterbi Faculty of Electrical Engineering
Haifa 32000, Israel}
\email{robert@ee.technion.ac.il}

\maketitle
\begin{abstract}

We first consider  {\it deterministic} immersions 
 of the  
$d$-dimensional sphere into high dimensional Euclidean spaces, where the immersion is via spherical harmonics of level $n$. The main result of the article is the, a priori unexpected, fact that there is a uniform lower bound to the critical radius of the immersions as $n\to\infty$.  This fact has immediate implications for {\it random} spherical harmonics  with  fixed $L^2$-norm. In particular,  it leads to an exact and explicit formulae for the tail probability of their  (large deviation) suprema  by the tube formula, and  also relates this to the expected Euler characteristic of their upper level sets.
\end{abstract}



\section{Introduction}

The spherical harmonics, of level $n\geq 1$, on the $d$-dimensional unit sphere $S^d$, are the collection of the 
\begin{equation}
\label{dimn}
k^d_n\ =\ \frac{2n+d-1}{n+d-1}{n+d-1\choose d-1} 
\end{equation}
eigenfunctions $\{\phi_j^{n,d}\}_{j=1}^\kdn$ of the Laplacian $\Delta_d$ on $S^d$, satisfying,
\beq
\label{eq:eigen}
\Delta_d \phi^{n,d}_j(x) =-n(n+d-1)\phi_j^{n,d}(x).
\eeq
It is then immediate that for any vector $a=(a_1,\dots,a_\kdn)$ of reals, the  functions 
\beq
\label{eq:rsh}
\Phi^d_n\  \definedas  \ \sum_{j=1}^{k^d_n} a_j\phi_j^{n,d} 
\eeq
solve the wave equation 
\beq
\label{eq:wave}
\Delta_d \Pdn\ = \ \alpha\, \Pdn,
\eeq
where
$\alpha=-n(n+d-1)$. Thus, with some ambiguity, both the $\Pdn$ and their linear combinations are often also referred to as spherical harmonics, or wave functions on the sphere.

Instead of taking the $a_j$ in \eqref{eq:rsh} constant, they could also be taken to be random. Two classical choices are either  to take   the vector $a$ to be uniformly distributed  on $S^{\kdn-1}$, or to take the $a_j$ as independent, standard Gaussians. In the former case we refer to random spherical harmonics under the spherical ensemble, while in the latter we refer to the Gaussian ensemble. The two are clearly related, due to the fact that, if the $a_j$ are Gaussian, then normalizing $a\to a/\|a\|$ gives a uniform variable on $S^{\kdn-1}$. Thus, the spherical harmonics under the spherical ensemble are a conditioned version of those under a Gaussian ensemble, with a corresponding statement going in the opposite direction.

The relationship between the two ensembles has been a recurrent theme in the general theory of Gaussian processes with a finite Karhunen-Lo\'eve expansion; i.e.\ processes which have a finite expansion similar to \eqref{eq:rsh}, although both the $\phi_j$  and the space over which they, and the process, are defined might be quite general (e.g.\  \cite{AT,S1,TK}). We shall give some more details a moment, but for the moment note that proofs based on this relationship typically only work when  the expansion is finite. If the processes in question have an infinite expansion, then approximating them with a finite expansion and  taking a passage to a limit has, to the best of our knowledge, only worked in situations in which the limit process is very smooth, typically at least $C^2$. 

Smoothness of  random spherical harmonics as $n\to\infty$  is most definitely not one of their properties, since the $n=\infty$ limit is not only not $C^2$,  but rather is a generalized function  (cf.\ \cite{CJW}). Consequently, 
one would not expect the passage to the limit mentioned in the previous paragraph to be at all relevant for them. The rather surprising result of this paper is that this is not exactly the case, and, with the right normalizations, connections between the spherical ensemble and integral geometry  which hold for the finite $n$ case still make sense as  $n\to \infty$.  In particular, we shall obtain 
explicit  formulae for the tail probability of the supremum of random spherical harmonics above  high levels,   and for  the expected Euler characteristic of the excursion sets (cf.\  \eqref{def1}). The derivations will rely on a very surprising result about a certain immersion of $S^d$ into the sphere $S^{\kdn-1}$, which has independent interest, and is really the main result of the paper. Thus we describe it first, then describe its implications for random spherical harmonics, and then close the introduction with a roadmap to the remainder of the paper.

\subsection{Spherical harmonics and the immersion}
The main result of the paper is actually a deterministic one, and rather simple to state.

%
%
%

Consider the map  $i_n^d: S^d\to \mathbb R^{k^d_n}$, defined by  
\begin{equation}
\label{id}
i_n^d(x) \  = \ \sqrt{\frac{s_d }{k^d_n}}(\phi_{1}^{n,d},\cdots, \phi_{k^d_n}^{n,d}),
\end{equation}
where \beqq
s_d  \ = \  
 \frac{2\pi^{(d+1)/2}}{\Gamma ((d+1)/2)},
\eeqq
is the Euclidean surface area of $S^d$.   

It is an easy calculation, that we shall carry out  in Section \ref{eeee}, that  $\|i_n^d(x)\| =1$ for all $x\in S^d$, so that $i_n^d$ is actually an mapping of spheres into spheres, viz. 
\begin{equation}\label{id2}i_n^d:\,\,\, S^d\to  S^{k_n^d-1}.\end{equation} 

As proved in \cite{N, Z}, this map is actually an immersion for sufficiently large $n$. Indeed, if $n$ is odd it is an embedding, while if $n$ is even then $i_n^d(S^d)\cong \mathbb RP^d$, the real projective space of dimension $d$. Furthermore, the pullback of the Euclidean metric to $S^d$ has the  leading order expansion 
\begin{equation}(i_n^d)^*(g_E)\cong c_dn^2 g_{S^d},
\end{equation}
where $c_d$ is a constant depending on $d$ and $g_{S^d}$ is the standard round metric on $S^d$.

Hence, roughly speaking, a geodesic of  unit length on  the unit sphere $S^d$ will be stretched  by a factor of order $n$ under the map $i_n^d$, and so  it is reasonable to expects that its image, as with that of the entire sphere,  becomes highly `twisted'   as $n$ grows.  An informative measure of twistedness is provided by the notion of {\it reach} or {\it critical radius}, which we shall define and  describe in Section \ref{radii}, and which is a measure of both the local and global smoothness of a set.  In general, the smaller the reach of a set, the less well behaved it is.
In view of the last three sentences, the following result, 
which shows that   there exists a uniform lower for the critical radii of the immersions as $n\to\infty$, is thus somewhat surprising:

\begin{theorem}\label{1} For  sufficiently large  $n$, the reach of the immersion $i_n(S^d)$ in $\mathbb R^{k_n^d}$ has a strictly positive, uniform in $n$, lower bound which  depends only on $d$.
\end{theorem}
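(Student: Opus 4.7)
The plan is to use Federer's characterization
\[
\mathrm{reach}(M) = \inf_{p \ne q \in M} \frac{\|p-q\|^2}{2\,\mathrm{dist}(q-p,\,T_pM)},
\]
applied to $M = i_n^d(S^d)$, and exploit the equivariance of $i_n^d$ under the $SO(d+1)$-action on $S^d$ and the induced orthogonal action on $\mathbb R^{k_n^d}$ (which permutes the level-$n$ harmonics) to reduce the infimum to a one-parameter problem in the geodesic angle $\theta = d_{S^d}(x_0,y) \in (0,\pi]$, for $x_0$ a fixed basepoint. The main simplification comes from the addition formula for spherical harmonics,
\[
\langle i_n^d(x_0), i_n^d(y) \rangle = p_n^d(\cos\theta),\qquad p_n^d(t):=C_n^{(d-1)/2}(t)/C_n^{(d-1)/2}(1).
\]
Using the $SO(d-1)$-stabilizer of the pair $(x_0,y)$, the tangential component of $i_n^d(y)-i_n^d(x_0)$ at $i_n^d(x_0)$ is forced to be one-dimensional; and because $i_n^d(x_0) \in S^{k_n^d-1}$ is itself a unit normal to the image, a direct computation yields
\[
A(\theta):=\|i_n^d(x_0)-i_n^d(y)\|^2 = 2(1-p_n^d(\cos\theta)),\quad B(\theta):=\|P_{T_pM}(i_n^d(y)-i_n^d(x_0))\|^2 = \frac{((p_n^d)'(\cos\theta)\sin\theta)^2}{(p_n^d)'(1)},
\]
whence $\mathrm{reach}(i_n^d(S^d))^2 = \inf_{\theta} A(\theta)^2/(4(A(\theta)-B(\theta)))$.

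I plan to establish a uniform-in-$n$ positive lower bound on this infimum by a case split in $\theta$. For $\theta$ bounded away from $0$ (and from $\pi$ when $n$ is even), standard Gegenbauer asymptotics give $|p_n^d(\cos\theta)|\to 0$, so $A(\theta) \ge 1$ for all large $n$; combined with the trivial $A - B \le A$, this yields $A^2/(A-B) \ge A \ge 1$. For $\theta$ near $0$, the Gegenbauer ODE produces $(p_n^d)'(1) = n(n+d-1)/d$ and $(p_n^d)''(1) = n(n+d-1)(n(n+d-1)-d)/(d(d+2))$, and Taylor expansion then gives $A(\theta) = (p_n^d)'(1)\theta^2 + O(\theta^4)$ and $A(\theta)-B(\theta) = \tfrac14((p_n^d)'(1) + 3(p_n^d)''(1))\theta^4 + O(\theta^6)$, so the ratio $A^2/(A-B)$ tends to $4(d+2)/(3d)$ as $\theta\to 0$ and $n\to\infty$. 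Both regimes produce positive lower bounds depending only on $d$.

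The main obstacle is the intermediate regime $\theta \sim 1/n$, where neither estimate above is directly decisive. Here I would invoke the Hilb-type asymptotic $p_n^d(\cos(s/n)) \to \widetilde p_d(s):=2^{(d-2)/2}\Gamma(d/2)\,s^{-(d-2)/2} J_{(d-2)/2}(s)$, uniformly on compact subsets of $[0,\infty)$, together with the companion expansion for $(p_n^d)'$, to rewrite $A^2/(A-B)$ under the rescaling $s = n\theta$ as a continuous function $R(s)$ on $(0,\infty)$. A short computation shows $R(s) \to 4(d+2)/(3d)$ as $s \to 0^+$ and $R(s) \to 2$ as $s \to \infty$, matching the two regimes above at their endpoints, so the required uniform lower bound reduces to showing that $R(s)$ has no interior zero on $(0,\infty)$. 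I expect this to follow from elementary analysis of the Bessel functions $J_{(d-2)/2}$ and $J_{d/2}$ (the latter arising via $(s^{-\nu} J_\nu(s))' = -s^{-\nu} J_{\nu+1}(s)$); carrying this Bessel analysis through, and then patching the limiting statement on $R$ to the finite-$n$ estimates in the small- and large-$\theta$ regimes, is where the real technical work lies.
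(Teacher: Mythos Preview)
Your proposal is correct and follows essentially the same route as the paper: reduce to a one-variable infimum in $\theta$ via rotational invariance and the addition formula, obtain exactly the expression $A(\theta)^2/(A(\theta)-B(\theta))$ with $A=2(1-p_n^d(\cos\theta))$ and $B=((p_n^d)'(\cos\theta)\sin\theta)^2/(p_n^d)'(1)$, and then split the range of $\theta$ into a far regime (decay of $p_n^d$) and a near/rescaled regime handled by Hilb's asymptotic and the Bessel limit $\widetilde p_d$. The paper carries out the same program with an explicit three-interval split $[0,c/n]\cup[c/n,n^{-3/4}]\cup[n^{-3/4},\pi/2]$ (plus the reflected interval for odd $n$), and the ``real technical work'' you flag---controlling the error terms in Hilb's expansion so that the finite-$n$ infimum actually converges to $\inf_s R(s)$---is exactly what occupies Sections~\ref{2dproof} and~\ref{eeee}.
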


An explicit lower bound for the two dimensional case is given in \eqref{eq:lb2}, and for general in \eqref{eq:lbd}.
From Theorem \ref{1} it follows that there is a lower bound for the critical radius of   $i_n^d(S^d)$ considered as a subset of $S^{k_n^d-1}$. Let $\rho_d$ denote this new lower bound throughout the article.

With the deterministic Theorem \ref{1} in place, we can turn to its implications in a random setting.

\subsection{Random spherical harmonics}

As we have already mentioned, our results touch on random spherical harmonics under both the spherical and Gaussian ensembles. 
Both of these are objects of active research, much of the motivation coming from 
%
 Berry's conjectures in the 1970's (e.g.\ \cite{Berry}) linking them to the  eigenstates of  semi-classical, quantum, Hamiltonian systems, but more recently  motivated by intrinsic mathematical interest. Thus, for example, there is a large and growing mathematical literature on the nodal domains of these systems (e.g.\ \cite{NS,SW}), although its roots too are in the quantum mechanical applications. There is also a rich literature on exceedence probabilities (e.g.\ \cite{CX, VM, VM2}), while part of the general exceedence theory for Gaussian random fields (for which \cite{AT}) will be our basic reference) is actually motivated by the statistical analysis of the cosmic microwave background radiation data. 
 
 Throughout this paper,  we shall concentrate primarily on the spherical rather than the Gaussian ensemble. The reason is three-fold. Firstly, the calculations on reach in Sections \ref{2dproof} and \ref{eeee} are independent of the ensemble. Secondly,  when applying these 
 results one typically first treats the spherical ensemble, and then moves to the Gaussian ensemble via the conditioning argument described above. This is standard, and so 
 we shall not treat it further. Finally, under the spherical ensemble, random spherical harmonics also have a property that makes them of intrinsic mathematical interest. It follows from the properties of (deterministic) spherical harmonics that, in the spherical case,  
\beq
\label{eq:pL2}
\|\Phi_n^d\|_{L^2}\ =\  \int_{S^d}  \big| \Pdn(x)\big|^2 \,dV_{g_{S^d}}\ = \  1,
\eeq
where we write $V_{g_{S^d}}$ for volume measure with respect to $g_{S^d}$. Put more simply, $V_{g_{S^d}}$  measures surface area on 
$S^d$, so that, for example, $s_d= V_{g_{S^d}}(S^d)$. Note that, while the $\Pdn$ are random, the equality in \eqref{eq:pL2} holds for each realization, or, to be more precise, with probability one.


From this it follows, if we now write $\Hdn$ to denote the $n$-th eigenspace of $\Delta_d$ generated by the solutions of the wave equation \eqref{eq:eigen}, and 
 $S\Hdn$ to denote $L^2$-sphere in this space,  that $\Pdn$, under the spherical ensemble,  is a random element of $S\Hdn$. Thus it provides a mathematical model for studying this space.  
 
 Two results that at first seem somewhat at odds with \eqref{eq:pL2}  are due to Burq and Lebeau  \cite{BL}.  To state them we need some notation. In particular, we shall denote probabilities and expectations under the spherical ensemble by $\Pmnd$ and $\Emnd$, respectively. 
 Then Burq and Lebeau showed that, for $u \geq 1$,
 and all $\alpha < s_d$, 
\beq
\label{eq:prob}
\Pmnd\left\{  \sup_{S^d}\left|\Phi_n^d(x)\right| \ > \ u\right\}
\ \leq \ C n^{-d(1+d/2)}e^{-\alpha u^2}.
\eeq
The result \eqref{eq:prob}  is typical of what we referred to above as an exceedence probability. The second, related, result established the 
logarithmic growth of the expectation of  suprema; viz.\   for some $0<c<C<\infty$,
\beq
\label{eq:BL}
c\sqrt{\log n}\ \leq\   \Emnd  \left\{\sup_{S^d}\left|\Phi_n^d(x)\right|\right\} \ \leq   \ C\sqrt{\log n}.
\eeq

Combining \eqref{eq:pL2}--\eqref{eq:BL} we obtain a picture of sample paths for $\Pdn$ which, while  almost surely $L^2$-integrable, have local behavior which grows increasingly erratic as $n\to\infty$, with the the supremum having an exponential concentration of measure around $\sqrt {\log n}$. 

There are also analogues of \eqref{eq:prob} and \eqref{eq:BL} under the Gaussian ensemble. The close connection between the above results for the two ensembles is not coincidental, but, rather, is related to the fact that the spherical ensemble is a conditional version of the Gaussian ensemble as we mentioned above.

However, it turns out that, despite the irregular behaviour of random spherical harmonics for large $n$,  the uniform lower bound that Theorem \ref{1} provides for the critical radii of the immersions $i_n^d$ actually allows one to exploit this general approach to prove a number of interesting results.

\subsection{Consequences for random spherical harmonics}


%

%

We need some notation. For $u>0$, denote the  excursion sets of $\Pdn$ by
\beq
\label{def1}
\And (u) \ =  
\  \left\{x\in S^d:\,\,  \Phi^d_n(x)  > u  \right\}.
\eeq

 \begin{theorem}\label{34}
 Let $\Pdn$ be spherical harmonics under the spherical ensemble.  Then there exist constants $\rho_d>0$ such that, for sufficiently large $n$,  
 and for  all  $u > \sqrt {k^d_n/s_d}\cos\rho_d$,
   \beq
  \label{eq:main} 
  \mathbb P_{\mu_n^d} \left\{\sup_{S^d}  {\Phi^{d}_n(x)}> u \right\} 
   =  \kappa\,\E_{\mu_n^d}\left\{\chi\left(\And\left(u\right) \right)\right\},
 \eeq
where $\kappa=1/2$ if $n$ is even and $1$ if $n$ is odd, and $\chi (A)$ denotes the Euler characteristic of the set $A$. 
%
%

\end{theorem}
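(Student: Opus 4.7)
The plan is to identify the supremum event with a tube event in $S^{\kdn-1}$ around the immersed submanifold $i_n^d(S^d)$, and then appeal to the critical-radius bound of Theorem \ref{1} to pin down the topology of the excursion set $\And(u)$. Under the spherical ensemble the coefficient vector $a=(a_1,\dots,a_{\kdn})$ is uniform on $S^{\kdn-1}$, so from the definition \eqref{id} of $i_n^d$ one has
\[
\Pdn(x)\;=\;\sqrt{\kdn/s_d}\,\langle a,i_n^d(x)\rangle\;=\;\sqrt{\kdn/s_d}\,\cos\theta\bigl(a,i_n^d(x)\bigr),
\]
where $\theta$ is geodesic distance on $S^{\kdn-1}$. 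Setting $r=\arccos(u\sqrt{s_d/\kdn})$, the excursion set is $\And(u)=(i_n^d)^{-1}(B(a,r))$ with $B(a,r)$ the open spherical ball, and the supremum event $\{\sup_{S^d}\Pdn>u\}$ coincides with $\{a\in\mathrm{Tube}(i_n^d(S^d),r)\}$. The hypothesis $u>\sqrt{\kdn/s_d}\cos\rho_d$ is precisely the requirement $r<\rho_d$, placing us strictly below the critical radius supplied by Theorem \ref{1}.

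With $r<\rho_d$ in hand I would run the standard reach-based topological argument. Every point of $\mathrm{Tube}(i_n^d(S^d),r)$ then admits a unique nearest-point projection onto $i_n^d(S^d)$, and the intersection $B(a,r)\cap i_n^d(S^d)$ is a contractible geodesic cap, retracting to the nearest point of $a$ along unique minimizing geodesics. Now I would branch on parity. For $n$ odd, $i_n^d$ is an embedding, so $\And(u)$ is diffeomorphic to the cap and therefore has Euler characteristic $1$. For $n$ even, $i_n^d$ factors as a two-to-one covering $S^d\to i_n^d(S^d)\cong\mathbb{RP}^d$ followed by an embedding, and $\Pdn$ is an even function on $S^d$; consequently, whenever non-empty, $\And(u)$ splits into two disjoint, antipodally related, contractible caps with Euler characteristic $2$. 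In both cases
\[
\chi\bigl(\And(u)\bigr)\;=\;\frac{1}{\kappa}\,\mathbf{1}_{\{\sup_{S^d}\Pdn>u\}},
\]
and taking $\Emnd$ of both sides yields the identity claimed in the theorem.

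The main obstacle is the topological step. Standard reach theory is classically formulated in Euclidean space, whereas here the ambient space is the sphere $S^{\kdn-1}$; one must either invoke the spherical tube/kinematic machinery (as in \cite{AT}) or transfer the Euclidean facts using $S^{\kdn-1}\subset\mathbb{R}^{\kdn}$. A second delicate point is ruling out, for $r<\rho_d$, the possibility that two distinct sheets of the immersion contribute to the same geodesic cap in $i_n^d(S^d)$, since this is exactly what guarantees that the connected-component count is $1/\kappa$ and no more. Both are consequences of the definition of critical radius applied to $i_n^d(S^d)\subset S^{\kdn-1}$, combined with the covering structure of the immersion recalled after \eqref{id2}, but they need to be spelled out carefully.
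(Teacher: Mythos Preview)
Your proposal is correct and follows essentially the same route as the paper: rewrite $\Pdn(x)=\sqrt{\kdn/s_d}\,\langle a,i_n^d(x)\rangle$, use the critical-radius bound (transferred from $\real^{\kdn}$ to $S^{\kdn-1}$) to conclude that $B(a,r)\cap i_n^d(S^d)$ is empty or contractible for $r<\rho_d$, and then account for the parity of $n$ through the one- or two-sheeted nature of $i_n^d$. The paper packages the contractibility step by citing a lemma of Sun \cite{S} rather than sketching a nearest-point retraction, and it phrases the conclusion as an equality of expectations rather than your pointwise identity $\chi(\And(u))=\kappa^{-1}\mathbf{1}_{\{\sup\Pdn>u\}}$, but these are differences of presentation, not of argument.
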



The factor of $\kappa$ here is due to the fact that $\Pdn(S^d)$ is isomorphic to $S^d$ for $n$ odd, it is isomorphic to 
$\real P^d$ for $n$ even.   This affects tube formulae, which are the key to the probability calculation leading to \eqref{eq:main}, but not   the Euler characteristic. 

Note that \eqref{eq:main} is an exact result (for quantifiably large $u$) and not an asymptotic equivalence as is more common, for example, in the Gaussian literature.

Precise expressions for the probability and expectation in Theorem \ref{34} are basically already available in the literature, and lead to the following set of results, in which $P_{n,d}$ denotes the $n$-th Legendre polynomial of order $d$. 


\begin{proposition} 
\label{prop:2} Under the conditions of Theorem \ref{34},
\beq
\label{eq:probc}
&& \mathbb P_{\mu^d_n}\left\{\sup_{x\in S^d} \Phi^{d}_n(x)>\ u \right\} \\ &&\qquad  = \frac {\kappa}{s_{{k_n^d-1}}}\sum_{j=0}^d f_{k_n^d,j}
\left( \cos^{-1}(u/\sqrt {{k^d_n}/{s_d}} )   \right) [P_{n,d}'(1)]^{j/2}\mathcal L_j(S^d),   \nonumber 
\eeq
where $\mathcal L_{j}(S^d)$ are the  $j$-th Lipschitz-Killing curvatures  of the unit sphere $S^d$,  given explicitly by 
\eqref{curvatures},
   and the  $f_{k_n^d, j}$ are  functions defined by \eqref{funtions} below. 
\end{proposition}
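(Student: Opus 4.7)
The plan is to convert the supremum event into a spherical tube intersection on $S^{k_n^d-1}$ and then apply Weyl's exact spherical tube formula, leveraging the uniform reach bound of Theorem \ref{1}. Using the addition formula $\sum_{j=1}^{k_n^d}\phi_j^{n,d}(x)\phi_j^{n,d}(y) = (k_n^d/s_d)\, P_{n,d}(\langle x,y\rangle)$, evaluated on the diagonal, one obtains both $\|i_n^d(x)\| = 1$ and the identity $\Phi_n^d(x) = \sqrt{k_n^d/s_d}\,\langle a, i_n^d(x)\rangle$. Since $a$ is uniform on $S^{k_n^d-1}$ under the spherical ensemble, the event $\{\sup_x \Phi_n^d(x) > u\}$ is precisely the event that $a$ lies in the geodesic tube of radius $\theta = \cos^{-1}(u/\sqrt{k_n^d/s_d})$ about $i_n^d(S^d)$ in $S^{k_n^d-1}$, and the probability equals the normalized spherical volume of that tube.

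Next, I would invoke Weyl's spherical tube formula: for a smooth submanifold $M$ of $S^{k-1}$ and any $\theta$ strictly less than the reach of $M$ in $S^{k-1}$, the tube volume admits the exact expansion
\[
\mathrm{Vol}\bigl(T(M,\theta)\bigr) \;=\; \sum_{j=0}^{\dim M} f_{k,j}(\theta)\,\mathcal L_j(M),
\]
where the $f_{k,j}$ are the standard spherical tube coefficients referred to in \eqref{funtions}. The hypothesis $u > \sqrt{k_n^d/s_d}\cos\rho_d$ translates precisely into $\theta < \rho_d$, so the lower bound $\rho_d$ on the reach of $i_n^d(S^d)$ in $S^{k_n^d-1}$ furnished by Theorem \ref{1} makes this expansion exact with $M = i_n^d(S^d)$.

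Finally, I would express the Lipschitz-Killing curvatures of $i_n^d(S^d)$ in terms of those of the round unit sphere. Differentiating the addition formula in both arguments and restricting to the diagonal, a short computation yields $(i_n^d)^{*} g_E = P_{n,d}'(1)\, g_{S^d}$, so the pullback metric is a constant rescaling of the round metric. The scaling behaviour $\mathcal L_j(\lambda^2 g) = \lambda^j \mathcal L_j(g)$ then gives $\mathcal L_j(i_n^d(S^d)) = [P_{n,d}'(1)]^{j/2}\mathcal L_j(S^d)$ when $n$ is odd and $i_n^d$ is an embedding; for even $n$, $i_n^d$ is a two-to-one cover of its image $\cong \mathbb RP^d$, which halves the ambient tube volume relative to the naive expression and produces the factor $\kappa$. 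Assembling these three ingredients and dividing by $s_{k_n^d-1}$ produces \eqref{eq:probc}.

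The main obstacle is the careful bookkeeping of the even-$n$ case: one must verify that the $\kappa = 1/2$ correction arises solely from the covering degree, with no orbifold or self-intersection contribution distorting the exact tube expansion at radii up to $\rho_d$. Once this is handled by passing to the quotient $\mathbb RP^d$ (which is legitimate exactly because the reach in Theorem \ref{1} is realized in the ambient sphere, keeping the two sheets of the cover separated), the pullback metric computation, the scaling of Lipschitz-Killing curvatures, and the appeal to Weyl's tube formula are all classical.
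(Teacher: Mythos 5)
Your proposal is correct and follows essentially the same route as the paper: rewrite $\Phi_n^d(x)/\sqrt{k_n^d/s_d}=\langle a,i_n^d(x)\rangle$ so the exceedence event becomes the event that the uniform point $a$ falls in the geodesic tube of radius $\cos^{-1}(u/\sqrt{k_n^d/s_d})$ around $i_n^d(S^d)$, apply the exact spherical tube formula \eqref{tubesphere2} (valid since Theorem \ref{1} gives reach at least $\rho_d$), and rescale the Lipschitz--Killing curvatures via $(i_n^d)^*g_E=P_{n,d}'(1)g_{S^d}$, with the factor $\kappa$ coming from the two-to-one identification onto $\mathbb{R}P^d$ for even $n$. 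This matches the paper's argument in Section \ref{ECTF} step for step, including the bookkeeping of the even-$n$ case.
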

As a direct corollary of Theorem \ref{1}  and   Proposition \ref{prop:2}  we have the following result for $S^2$: 
\begin{corollary}\label{tjm2}
For $u>\sqrt{(2n+1)/4\pi} \cos (\rho_2)$,
\beqq
&&\mathbb P_{\mu_n^2}\left\{\sup_{S^2} \Phi_n^2(x)>  u \right\}
 \\  &&\qquad =
 \frac{\kappa\Gamma (n+\frac{1}{2})}{\pi^{1/2}\Gamma(n-1)} \int_0^{  \cos^{-1}(u/\sqrt{(2n+1)/4\pi})  }\sin ^{2n-3}(r)
\\  &&\qquad\qquad
\ \times \Big\{2(n^2+n)   \left(1-\frac{2n-1}{2n-2}\sin^2(r)\right)+\frac{2\sin^2 (r)}{n-1}\Big\} \,dr.
\eeqq
\end{corollary}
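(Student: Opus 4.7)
The proposal is to specialize Proposition~\ref{prop:2} to $d = 2$ and then simplify. First I would assemble the ingredients particular to $d=2$: by \eqref{dimn}, $k_n^2 = 2n+1$, so the surface area of $S^{2n}$ is $s_{k_n^2-1} = s_{2n} = 2\pi^{n+1/2}/\Gamma(n+1/2)$, while $s_2 = 4\pi$. The Lipschitz--Killing curvatures of $S^2$ are $\mathcal{L}_0(S^2) = \chi(S^2) = 2$, $\mathcal{L}_1(S^2) = 0$ (by Gauss--Bonnet, since $S^2$ is closed and boundaryless), and $\mathcal{L}_2(S^2) = 4\pi$. The classical identity $P_{n,2}'(1) = n(n+1)/2$ handles the Legendre factor.

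Since $\mathcal{L}_1(S^2) = 0$, only the $j=0$ and $j=2$ terms in the sum of \eqref{eq:probc} survive. I would then expand $f_{k_n^2,\,0}$ and $f_{k_n^2,\,2}$ at the angle $\theta = \cos^{-1}\bigl(u/\sqrt{(2n+1)/(4\pi)}\bigr)$ using the defining formulas \eqref{funtions}; both are integrals of powers of $\sin(r)$, with exponents $k_n^2 - 2 = 2n-1$ and $k_n^2 - 4 = 2n-3$ respectively, multiplied by appropriate Gamma-function normalizations.

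Next, combining the prefactor $1/s_{2n}$ with the normalizations appearing in $f_{k_n^2,\, j}$ and with the factors $[P_{n,2}'(1)]^{j/2}$ should collapse to $\kappa\,\Gamma(n+\tfrac{1}{2})/(\pi^{1/2}\Gamma(n-1))$. Factoring the common $\sin^{2n-3}(r)$ out of the integrand, writing $\sin^{2n-1}(r) = \sin^{2n-3}(r)\sin^{2}(r)$, and grouping the $j=0$ and $j=2$ contributions then produces the bracketed expression $2(n^2+n)\bigl(1 - \tfrac{2n-1}{2n-2}\sin^2(r)\bigr) + 2\sin^2(r)/(n-1)$. The threshold $u > \sqrt{(2n+1)/(4\pi)}\cos(\rho_2)$ is exactly the $d=2$ specialization of the hypothesis of Proposition~\ref{prop:2}, with $\rho_2$ the uniform lower bound on the reach in $S^{k_n^2-1}$ provided by Theorem~\ref{1}.

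The main bookkeeping obstacle is reconciling the several Gamma-function constants coming from $s_{2n}$, from the normalizations in \eqref{funtions}, and from $[P_{n,2}'(1)]^{j/2}$ so that they consolidate cleanly to the displayed prefactor; the precise form and sign of each $f_{k_n^2,\, j}$ must be tracked carefully, after which the remaining manipulations are elementary.
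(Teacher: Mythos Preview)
Your proposal is correct and amounts to the same computation as the paper's, just organized slightly differently: the paper applies the two-dimensional tube formula \eqref{tititi} directly to the immersed surface $i_n(S^2)\subset S^{2n}$, computing $V(i_n(S^2))$ and $\chi(i_n(S^2))$ separately for odd and even $n$ (this is where $\kappa$ enters), whereas you specialize the already-packaged general formula \eqref{eq:probc} of Proposition~\ref{prop:2} to $d=2$, where $\kappa$ is already built in. Either way one lands on the same integrand after writing $\cos^2 r=1-\sin^2 r$ and using $\Gamma(n)=(n-1)\Gamma(n-1)$; your route is marginally more systematic, the paper's marginally more hands-on.
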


%
%
%

%
%
%
%


The simple structure of the two-dimensional result in Corollary \ref{tjm2} makes it easy to understand the large deviation nature of the result. In particular, since  $\Phi_n^2(x) = \sum  a_j \phi^{n,2}_j(x)$, it follows that
\beqq
 \left|\Phi_n^2(x)\right|^2 \ \leq \ (\sum_{j=1}^{k_n^2}  a_j^2)(  \sum_{j=1}^{k_n^2} \left|\phi_j^{n,2}(x)\right|^2)
 \ =\  \sum_{j=1}^{k_n^2} \left|\phi_j^{n,2}(x)\right|^2 \  =\ \frac{2n+1}{4\pi},
\eeqq
the last equality coming from \eqref{eq:K} and \eqref{eq:K1} below. Thus Corollary  \ref{tjm2} relates only to the range
$u\in [ \sqrt{(2n+1)/4\pi} \cos (\rho_2), \sqrt{(2n+1)/4\pi}]$, which makes it a large deviation result. As opposed to most large deviation results, however, this one is quite unique in the fact that the exceedence probability is precise, and not just an approximation. 

Obviously, a similar comment holds for Theorem \ref{34} and Proposition \ref{prop:2} for general $d$ and $n$.

\subsection{A roadmap}
We now turn to proving these results. In the following section we collect some results on spherical harmonics, and in Section \ref{radii} we do the same for critical radii. Section \ref{2dproof} then proves Theorem \ref{1} for the case $d=2$, while Section \ref{eeee} treats the case of general $d$. Section \ref{ECTF} proves the remaining results, and in the final Section \ref{CLOSING} we collect some comments relating our results to others in the literature and mention some interesting open problems.

%
%
%
%
%

\section{Spherical harmonics on $S^2$}\label{sec1}
In this section we shall collect a number of results specific to spherical harmonics on $S^2$, which we shall use in our proof of Theorem \ref{1}. Similar results hold in higher dimensions, but, for the moment, we stay in dimension 2. We then look at immersions.

Since, for this and most of the following two sections, we shall be dealing with the case of $S^2$, we shall drop the  the superscript 2 whenever it does not lead to ambiguities. Thus, $i^2_n$ becomes $i_n$, $\Phi_n^2$ becomes $\Phi_n$,  $P_{n,2}$ becomes $P_n$, and so forth.

\subsection{Some basic facts}
Consider the unit sphere $S^2$ equipped with the round metric $g_{S^2}$ and  with associated Laplacian $\Delta$.
The   spherical harmonics  $\phi^n_j$  are  then the  eigenfunctions of 
$$\Delta  
 \phi^n_j(x) =-n(n+1)\phi_j^n(x).$$
We normalize the eigenfunctions so that the $L^2$ norm of $\phi_j^n$ is $1$, and 
denote by  $\mathcal H_n$  their span. The  dimension of $\mathcal H_n$ is $2n+1$. Since the Laplacian  is invariant under  rotation, 
 $\mathcal H_n$
is invariant under the action $\phi(x) \to \phi(Qx)$ for $Q \in
SO(3)$. Moreover, if $\{\phi^n_j(x)\}$ is an orthonormal basis of $\mathcal H_n$, so is
  $\{\phi_j^n (Qx)\}$. 
  
 Let $\mathcal H_n$  be spanned by $\{\phi_{-n}^n,\dots, \phi_0^n,\dots, \phi_{n}^n\}$. We denote $K_n$ as  the spectral projection from the $L^2$-integrable functions to the spherical harmonics of level $n$, so that 
 $$K_n:\,\,\, L^2(S^2)\to \mathcal H_n(S^2).$$ Then the kernel of $K_n$ is given by
\beq
\label{eq:K}
K_n(x,y)= \sum_{j=-n}^n \phi_j^n(x)\phi_j^n(y).
\eeq
In fact, the spectral projection kernel has the following explicit formula \cite{AH,So}.
$$K_n(x,y)=  \frac{2n+1}{4\pi}P_n(\cos\Theta(x,y)),$$
 where 
 $\Theta(x,y)$ is the angle between the vectors $x, y\in S^2$.  The Legendre polynomials (of order 2) are defined by
 $$P_n(x)=\frac{1}{2^n n!} \frac{d^n(x^2-1)^n}{dx^n}.
 $$
 Some basic facts that we shall require are \cite{AH,T} 
 \beq
 \label{eq:basic}
  P_n(1) = 1;\quad P_n'(1)=\frac{n^2+n}2;\quad  -1\leq P_n(x)\leq 1, \ \  \mbox{for}\,\,x\in[-1,1],
 \eeq
 and 
 \beq
 \label{symmetry}
 P_n(-x)\ =\ (-1)^nP_n(x).
 \eeq
Thus, on  the diagonal,  the kernel satisfies
\beq
\label{eq:K1}
K_n(x,x)\ =\   \frac{2n+1}{4\pi}P_n(1)\ =\ \frac{2n+1}{4\pi}.
\eeq

\subsection{Immersions}
Consider the map
\begin{equation}\label{randomem}i_n:\, S^2\to \mathbb R^{2n+1},\quad x\to \sqrt{\frac{4\pi}{2n+1}}\left(\phi^n_{-n}(x), \dots, \phi^n_0(x),\dots, \phi^n_n(x)\right).
\end{equation}
For large enough $n$, this map is an immersion \cite{Z}. 

Defining  the normalized kernel 
$$\Pi_n(x,y)\ =\  \frac {4\pi}{2n+1}K_{n}(x,y)\ =\ P_n(\cos\Theta(x,y))$$
we have that the norm of $i_n(x)$ is given by
 $$\|i_n(x)\|^2\ =\ \frac{4\pi}{2n+1}\sum_{j=-n}^n|\phi_{j}(x)|^2\ =\ \Pi_n(x,x)\ =\ 1.
 $$
 Thus $i_n$ is actually a map from $S^2$ to $S^{2n-1}$,  and  the pullback of the Euclidean metric is  
\begin{equation}\label{metric}g_n=i_n^*(g_E) =\frac{n^2+n}2 g_{S^2},\end{equation}
where we use $g_E$ to denote  the standard Euclidean metric. 
While this fact is well known (cf.\ \cite{N, Z}) it will follow, en passant, from calculations below (cf.\ the argument surrounding \eqref{eq:foreqmetric}).

The distance between two points of the immersion is given by 
\beq
\label{dis}
\|i_n(x)-i_n(y)\|^2 &=& \Pi_n(x,x)+\Pi_n(y,y)-2\Pi_n(x,y)\\ &=&2(1-P_n(\cos\Theta(x,y))),  \nonumber
\eeq
 and so it follows from  \eqref{symmetry} that $i_n$ is an embedding for $n$ odd but   identifies antipodal
points   for $n$ even. Thus, in the  case of even $n$, it follows that $i_n(S^2)\cong \mathbb RP^2$. 


\section{The critical radius of $i_n(S^2)$}\label{radii}

The modern notion of  reach, or critical radius (terms which we shall use interchangeably)  seems to have appeared first in the classic paper \cite{FED}  of Federer, in which he introduced the notion of sets with positive reach and their associated curvatures and curvature measures. In doing so, Federer was able to include, in a single framework, Steiner's tube formula for convex sets and Weyl's tube formula for $C^2$ smooth submanifolds of $\real^n$.
The importance of this framework extended, however, far beyond tube formulae, as it became clear that much of the theory surrounding convex sets could be extended to sets that were, in some sense, locally convex, and that  the reach of a set was precisely the way to quantify this property.

To be just a little more precise, suppose $N$ is a smooth manifold  embedded in an ambient manifold $\wN$. Then
the local reach at a point $x\in N$ is the furthest distance one can travel, along any geodesic in $\wN$  based at $x$ but normal to $N$ in $\wN$, without meeting a similar vector originating at another point in $N$. The (global) reach of $N$ is then the infimum of all local reaches. As such it is related to local properties of $N$ through its second fundamental form, but also to global structure, since points on $N$ that are far apart in a geodesic sense might be quite close in the metric of the ambient space $\wN$. 

There are many, equivalent, formal definitions, of reach, but we shall take as our definition a result which is actually a theorem  of Takemura and Kuriki \cite{TK}, that states that 
for a compact Riemannian manifold $N\subset \mathbb R^k$,   
 the critical radius is given by 
\begin{equation}
\label{def:reach}
r_c(N) =\inf_{x,y\in N} \frac{\|x-y\|^2}{2 \|P^{\perp}_y(x-y)\|},\end{equation}
where $P^{\perp}_y(x-y)$ is the projection of  $x-y$
 to the normal bundle at $y$. 
 
 This is actually all we need for the remainder of the paper, and so for the reader interested to know more about critical radii we  refer you to the review \cite{Thale}  for an excellent coverage of the history and uses of this notion in Mathematics as a whole,  and to the expository sections of \cite{AKTW} to see why it is an important property in the theory of  random processes.

Our interest  now, however, is  in the critical radii of the immersions  $i_n(S^2)$ in $\mathbb R^{2n+1}$, and so we now concentrate solely on this. 

Rewriting \eqref{def:reach} for this setting, we have that  the critical radius of $i_n(S^2)$ is given by
\begin{equation}
\label{critical}
r_{c,n}\ :=\ \inf_{x,y\in S^2} \frac{\|i_n(x)-i_n(y)\|^2}{2 \|P^{\perp}_{i_n(y)}(i_n(x)-i_n(y))\|}.
\end{equation}
The  numerator here is given by \eqref{dis}, and the first step  regarding  the denominator is to compute  the projection of the vector $i_n(x)-i_n(y)$ to the normal space,
i.e.\ the orthogonal complement, in $\real^{2n+1}$,  of the tangent space $T_{i_n(y)} i_n(S^2)$.  

To this end, we move to  polar coordinates 
$$x\ =\ (\sin\theta_x\sin \phi_x,\,  \sin\theta_x\cos\phi_x,\, \cos\theta_x),$$
 with $0\leq \theta\leq \pi$ and $0\leq \phi<2\pi$, and the similar definition for $y$. 
 
 Note that the normalized projection kernel $\Pi_n$ is constant on diagonal, and so
 \begin{equation}\label{ddddddd}\d_\theta\Pi_n(y,y)=\d_\phi\Pi_n(y,y)=0.
 \end{equation}

We rewrite the normalized kernel in  polar coordinates as
\beq
\label{d}
\Pi_n(x,y)  &=& P_n(\sin\theta_x\sin\phi_x\sin\theta_y\sin\phi_y+ \\ &&\qquad\qquad \sin\theta_x\cos\phi_x\sin\theta_y\cos\phi_y+\cos\theta_x\cos\theta_y).  \nonumber 
\eeq
This yields
\beqq
\d_{\theta_y}\Pi_n(x,y)  &=& P'_n(\cos\Theta(x,y))
\big[\sin\theta_x\sin\phi_x\cos\theta_y\sin\phi_y  \\ &&\qquad\qquad +\sin\theta_x\cos\phi_x\cos\theta_y\cos\phi_y-\cos\theta_x\sin\theta_y\big],
\eeqq
and  
\beqq 
\d_{\phi_y}\Pi_n(x,y)&=&P'_n(\cos\Theta(x,y)) \big[\sin\theta_x\sin\phi_x\sin\theta_y\cos\phi_y
\\ && \qquad\qquad -\sin\theta_x\cos\phi_x\sin\theta_y\sin\phi_y\big].
\eeqq
Further differentiation now  yields   
\beq
\label{e1}
\d_{\theta_x}\d _{\theta_y}\Pi_n(x,y)|_{x=y}&=&  P'_n(1),
\\  \label{e2}\d_{\phi_x}\d _{\phi_y}\Pi_n(x,y)|_{x=y}&=& P'_n(1)\sin^2\theta,\\
 \label{e3}\d_{\theta_x}\d _{\phi_y}\Pi_n(x,y)|_{x=y}&=& 0.
 \eeq
An easy consequence of these three identities  is the fact, given in \eqref{metric}, that the pullback, under $i_n$, of the Euclidean metric on $\mathbb R^{2n+1}$ is a scaled  version of the standard metric on $S^2$. To see this, note that the pullback is just 
\beq
\label{eq:foreqmetric}
\sum d\phi^n_j(x)\otimes d\phi^n_j(x),
\eeq
which we can write as $d_xd_y\Pi_n(x,y)|_{x=y}$. Since  the differential operator $d $ is global, it is unchanged if we take derivatives with respect to the angle variables $\theta$ and $\phi$.
Applying now \eqref{e1}--\eqref{e3} and \eqref{eq:basic} immediately establishes \eqref{metric}.

 With polar notation, it is easy to see that the  tangent subspace at $i_n(y)$ is spanned by the vector $\left\{ \frac{\d i_n}{\d\theta}(y), \frac{\d i_n}{\d\phi}(y)\right\}$.   \eqref{e3} implies that these two vectors are orthogonal, i.e.,  $$\langle  \frac{\d i_n}{\d \theta}(y), \frac{\d i_n}{\d \phi}(y)\rangle=0.$$
 
  Thus the projection of  $i_n(x)-i_n(y)$ to the tangent space is 
$$
p_x(y)\ :=\ \frac{\langle i_n(x)-i_n(y), \frac{\d i_n}{\d \theta}(y) \rangle }{  |\frac{\d i_n}{\d \theta}(y)|^2}   \frac{\d i_n}{\d \theta}(y) +\frac{\langle i_n(x)-i_n(y), \frac{\d i_n}{\d \phi}(y) \rangle }{  |\frac{i_n(y)}{\d \phi}|^2}   \frac{\d i_n}{\d \phi}(y),
 $$
which can be rewritten as 
\beqq
 \frac{\d_{\theta_y}\Pi_n(x,y)- \d_{\theta_y}\Pi_n(y,y)}{ \d_{\theta_x}\d_{\theta_y}\Pi_n(x,y)|_{x=y}}   \frac{\d i_n}{\d \theta}(y) + \frac{\d_{\phi_y}\Pi_n(x,y)- \d_{\phi_y}\Pi_n(y,y)}{ \d_{\phi_x}\d_{\phi_y}\Pi_n(x,y)|_{x=y}}   \frac{\d i_n}{\d \phi}(y).
\eeqq
Applying  \eqref{ddddddd} to the above gives
\begin{equation}
\label{ra}
p_x(y)\ =\   \frac{\d_{\theta_y}\Pi_n(x,y) }{ \d_{\theta_x}\d_{\theta_y}\Pi_n(x,y)|_{x=y}}   \frac{\d i_n}{\d \theta}(y) + \frac{\d_{\phi_y}\Pi_n(x,y) }{ \d_{\phi_x}\d_{\phi_y}\Pi_n(x,y)|_{x=y}}   \frac{\d i_n}{\d \phi}(y).\end{equation}


It follows that the squared norm  of the projection $p_x(y)$ in \eqref{ra} can be written as 
\begin{equation}\label{ppp}\|p_x(y)\|^2=\frac{|\d_{\theta_y}\Pi_n(x,y)|^2 }{ \d_{\theta_x}\d_{\theta_y}\Pi_n(x,y)|_{x=y}} +\frac{|\d_{\phi_y}\Pi_n(x,y)|^2 }{ \d_{\phi_x}\d_{\phi_y}\Pi_n(x,y)|_{x=y}} .   \end{equation}
Thus we can express the  the critical radius  \eqref{critical}  as
$$r_{c,n}\ =\ \inf_{x,y\in S^2} \frac{\|i_n(x)-i_n(y)\|^2}{2\sqrt{\|i_n(x)-i_n(y)\|^2-\|p_x(y)\|^2}}.$$
By rotation invariance, it is clear that each of the terms within the infimum here are dependent only on the relative positions of $x$ and $y$, and so the local radius is actually the same at the image of each point on the sphere. Thus,  it suffices  to consider the local critical radius at  any point. Choosing $x=(0,0,1)$ for this point, the critical radius $r_{c,n}$ can be written as
\begin{equation}
\label{higherg}r_{c,n} \ =\ \inf_{y\in S^2 }\frac{\|i_n((0,0,1))-i_n(y)\|^2}{2\sqrt{\|i_n((0,0,1))-i_n(y)\|^2-\|p_{(0,0,1)}(y)\|^2}}.\end{equation}
For $x=(0,0,1)$, we can write the coordinates of $y$ as $(\theta, \phi)$, and so  \eqref{d}--\eqref{e3} become  

 \beqq
\Pi_n((0,0,1),y)  &=& P_n(\cos\theta), \\
 \d_{\theta_x}\d _{\theta_y}\Pi_n(x,y)|_{x=y}&=&  P'_n(1),
\\ \d_{\phi_x}\d _{\phi_y}\Pi_n(x,y)|_{x=y}&=& P'_n(1)\sin^2\theta,
\\ \d_{\theta_y}\Pi_n(x,y)|_{x=(0,0,1)}&=&- P'_n(\cos\theta)\sin\theta,
 \\ \d_{\phi_y}\Pi_n(x,y)|_{x=(0,0,1)}&=&0.
 \eeqq
Similarly,  \eqref{dis} becomes
$$ {\|i_n((0,0,1))-i_n(y)\|^2}\ =\ 2(1-P_n(\cos\theta)),$$
and \eqref{ppp} reads
$$\|p_{(0,0,1)}(y)\|^2=\frac{[ P'_n(\cos\theta)\sin\theta]^2}{P_n'(1)}.$$
Hence, we can now finally rewrite the critical radius of $i_n(S^2)$ in $\real^{2n+1}$ as
\beq
\label{eq:rcn}
r_{c,n} \ =\  \inf_{\theta\in [0,\pi]} \frac{1-P_n(\cos \theta)}{\sqrt{2-2P_n(\cos\theta)-\frac{ [P'_n(\cos\theta)\sin\theta]^2}{P_n'(1) }}},
\eeq
and we are now in a position to begin the more serious steps in the proof of Theorem \ref{1}, at least  for the case $d=2$.



\section{Proof of Theorem \ref{1} for $S^2$}\label{2dproof}

In view of the preceding section, in order to prove Theorem \ref{1} for the case $d=2$ we need to provide a lower bound for the expression given in 
\eqref{eq:rcn} that is independent of $n$, at least for $n$ large enough.

Note firstly that  $P_n(\cos\theta)$ is symmetric  (anti-symmetric) about $\theta= \pi /2$ for $n$ even (odd). Thus, for $n$ even,  it suffices to consider $\theta\in [0,\pi/2]$ in \eqref{eq:rcn}. For the moment we shall assume that $n$ is even, and then discuss the odd $n$ case at the end of the section. 

So, with $n$ even, fix a positive constant $c$ and divide $[0, \pi /2]$ into the three subintervals 
$$
[0,   c /n], \ \  [c/n, n^{- 3/4}], \ \ [n^{- 3/4},  \pi /2].
$$
 For the first two, short range, subintervals, our  strategy will be to study the rescaling limit of the projection kernel and its derivatives. The infimum for the third subinterval  will follow  directly from the rapid decay of the projection kernel and its derivatives.  The entire 
  proof is based on   Hilb's asymptotics for Legendre polynomials \cite{T}, specifically, there exists a (uniform in $n$) constant $c$, for which 
  \begin{equation}
P_n(\cos\theta )\ =\ 
\left(\frac\theta{\sin\theta}\right)^{1/2}J_0\left((n+\smallhalf)\theta\right)\,+\,R_n(\theta),
\end{equation}
where
 \begin{equation}
R_n(\theta)=
\begin{cases}
\theta^2 O(1), &0\leq \theta\leq   c/n,\\
 \theta^{1/2}O(n^{-  3/2}),  &c/n\leq \theta \leq   \pi /2,
\end{cases}
\end{equation}
and    $J_0(\theta) $ is the Bessel function of order 0.

The global  infimum is then 
\beq
\label{eq:3parts} 
\inf_{\theta\in [0,\pi/2]}  \ =\ \min\left\{\inf_{[0,   c/n]},\  \inf_{[   c/n,  n^{ -3/4}]}, \ \inf_{[ n^{ -3/4},   \pi /2]}\right\} =: \min \left\{I_n, II_n, III_n\right\}.
\eeq
Consider the first infimum here:
$$I_n=\inf_{\theta\in [0, c/n]}\frac{1-P_n(\cos \theta)}{\sqrt{2-2P_n(\cos\theta)-\frac{ [P'_n(\cos\theta)\sin\theta]^2}{P_n'(1) }}}. $$

In order to investigate the error terms here, and to make the notation easier, we study a rescaling limit via a new parameter $y$, where  $y=n\theta$, so that  $y\in [0, c]$.
By applying Hilb's asymptotic on $[0,  c /n]$, we have   
$$P_n(\cos(y/ n))\  =\ J_0(y)+O(n^{-1}) .$$
Next, for,  the rescaling of $P'_n(\cos \theta)$,  we note the relation \cite{AH, CMW}
$$P_n'(\cos\theta)=\frac{n+1}{\sin^2\theta}\,\left[\cos\theta P_n(\cos\theta)-P_{n+1}(\cos\theta)\right].
$$
Again applying Hilb's asymptotic,  we rescale  ${ [P'_n(\cos\theta)\sin\theta]^2}/{P_n'(1)}   $ to obtain

$$ \frac{[\frac{n+1}{\frac {y}{n}+O(n^{-3})}]^2[(1+O(n^{-2}))(J_0(y+\frac y{2n})+O(n^{-2}))-(J_0(y+\frac {3y}{2n})+O(n^{-2}))]^2}{P_n'(1)}.$$
We apply the Taylor expansion
$$J_0(y+\frac y{2n})=J_0(y)+\frac y{2n}J'_0(y)+O(n^{-2})$$
to further get the rescaling
$$2[J_0'(y)]^2+O(n^{-1}).$$
Hence, as $n\to\infty$, $I_n$ is  asymptotic to
\beq
\label{eq:I}
I_\infty\ =\ \inf_{y\in [0,c]} \frac{1-J_0(y)}{\sqrt{2-2J_0(y)-2[J'(y)]^2 }}.  
\eeq
For $II_n$, we also apply the rescaling technique, the only difference between this and the previous case being in  the estimates of the error terms, where we need to show that the leading terms in the rescaling limits will dominate the error terms. The details are as follows.

Again, take $y=n\theta$, so that now  $y\in [c, n^{1/4}]$.  Hilb's asymptotic gives
$$
P_n(\cos (y/n))=(1+O(n^{-3/2}))^{1/2}J_0(y+{y}/{(2n)})+O(n^{-{15}/8}),  $$
where the uniform bound $O(n^{-{15}/8})$ is achieved when $R_n(\theta)$ is evaluated at $\theta = n^{-3/4}$.

A Taylor expansion yields
$$
J_0(y+\frac y {2n})\ =\  J_0(y)+\frac y{2n}J_0'(y)+\cdots .$$
We now  need two basic properties from \cite{T}  for Bessel functions. The first is that
\begin{equation}\label{J}
J_n(x)
\sim \sqrt{\frac 2{\pi x}}\cos(x-\frac{n\pi}2-\frac\pi 4),\qquad \text{as}\ x \to\infty. 
\end{equation}
The second is that 
\begin{equation}\label{j}J'_0(x)=-J_1(x).\end{equation}
Combining these two properties, we have, for $n$ large enough,  the following uniform  estimate for $y\in[c, n^{1/4}]$;
$$J_0(y+\frac y {2n})=J_0(y)+O(n^{-3/4}).$$
Note that the leading term $J_0(y)$ will always dominate the error term, since, by \eqref{J}, the growth of $J_0(y)$
is at least of order $O(n^{-1/8})$. 

Hence, we have the rescaling limit
$$P_n(\cos \frac yn)= J_0(y)+O(n^{-3/4}) $$
on the interval $y\in [c, n^{1/4}]$.
 
A similar argument shows that the rescaling of  $[P'_n(\cos\theta)\sin\theta]^2/{P_n'(1)}$, for $n$ large enough,
will be dominated by the leading term
$2[J_0'(y)]^2$.  Hence,  $II_n$ will converge, as $n\to\infty$ to  
\beq
\label{eq:II}
II_\infty\ =\ \inf_{y\in [c, \infty]} \frac{1-J_0(y)}{\sqrt{2-2J_0(y)-2[J_0'(y)]^2 }}.
\eeq

We now turn to  $III_n$, which is the last of the three terms to estimate. 
From the the asymptotic expansion \eqref{J},  we see that $J_0((n+\smallhalf)\theta)$ decays rapidly on $\theta\in[n^{- {1}/4},   \pi/2]$, and has, in fact, a uniform bound of $O(n^{-  3/8})$. Thus by the Hilb asymptotic, the same is true of $P_n(\cos\theta)$.  As for  the derivative, Lemma 9.3 of \cite{CMW}  proves 
that, for $\theta\in [c/n, \pi /2]$, 
$$
P_n'(\cos\theta)\ =\ \sqrt{\frac 2\pi}\frac{n^{1/2}}{\sin^{\frac 32}\theta}\left[\sin\phi^--\frac 1{8n\theta}\sin\phi^+\right]+O(n^{-1/2}\theta^{-5/2}),
 $$
where $\phi^{\pm}=(n+\frac 12)\theta\pm \pi/4$.
This implies the rapid decay of ${[P'_n(\cos\theta)\sin\theta]^2}/$ $ {P_n'(1)}$ if we   apply the expression of $P'_n(1)$, and so the $n\to\infty$ limit of 
$III_n$  is
\beq
\label{eq:III}
III_\infty\ =\ \frac1{\sqrt 2}.
\eeq 
Now fix (small) $\epsilon >0$. Combining  \eqref{eq:I}, \eqref{eq:II} and \eqref{eq:III} with \eqref{eq:3parts} and the definition \eqref{eq:rcn} of $r_{c,n}$, 
it follows that  there exists a finite  $n_\epsilon$ such that,  for all  even $n>n_\epsilon $, we have
\beq
\label{eq:rcnbound}
r_{c,n}   
\ \geq\  \min\left\{ \inf_{y\in [0,\infty]} \frac{1-J_0(y)}{\sqrt{2-2J_0(y)-2[J_0'(y)]^2 }}   , \frac1{\sqrt 2}\right\}-\epsilon.
\eeq
 As an aside, note that if we write the expansion of the Bessel function $J_0(y)=1-{y^2}/4+{y^4}/{64}+O(y^5)$ around $y=0$, then the expression 
 $$\frac{1-J_0(y)}{\sqrt{2-2J_0(y)-2[J_0'(y)]^2}}
 $$
  has the limit, as $y\to 0$,  of $\sqrt{2/3}$. Since this is trivially positive, and $\epsilon$ was arbitrary,   Theorem \ref{1} is now proven for $d=2$ and for even $n$, large enough.
 
However, we still need to treat the cases when $n$ is odd. On the interval $\theta \in [0,\pi/ 2]$,  exactly the same argument as above  for the even case applies, and the same infimum is achieved.  But when we consider on $\theta\in [\pi /2, \pi]$, there is a sign change in the expression of $P_n(\cos \theta)$, since $P_n(-x)=-P_n(x)$ for $n$ odd. 
Taking this into account, we obtain the global lower bound, for arbitrary $\epsilon$ and for all $n$ large enough, of 
\beq
\label{eq:lb2}
r_{c,n} &\geq&
\min\left\{ \inf_{y\in [0,\infty]} \frac{1-J_0(y)}{\sqrt{2-2J_0(y)-2[J_0'(y)]^2 }}   ,\  \frac1{\sqrt 2},\  
\right.  \\
&& \left.   \qquad\qquad\qquad\qquad \inf_{y\in [0,\infty]} \frac{1+J_0(y)}{\sqrt{2+2J_0(y)-2[J_0'(y)]^2 }} \right\}-\epsilon,
\nonumber
\eeq
 and the proof of Theorem \ref{1} for the case $d=2$ is done.

Figure \ref{fig:test} shows the behaviour of the first and third terms in the lower bound for $r_{c,n}$ in the above inequality.

 \begin{figure}
  \includegraphics[width=.350\linewidth]{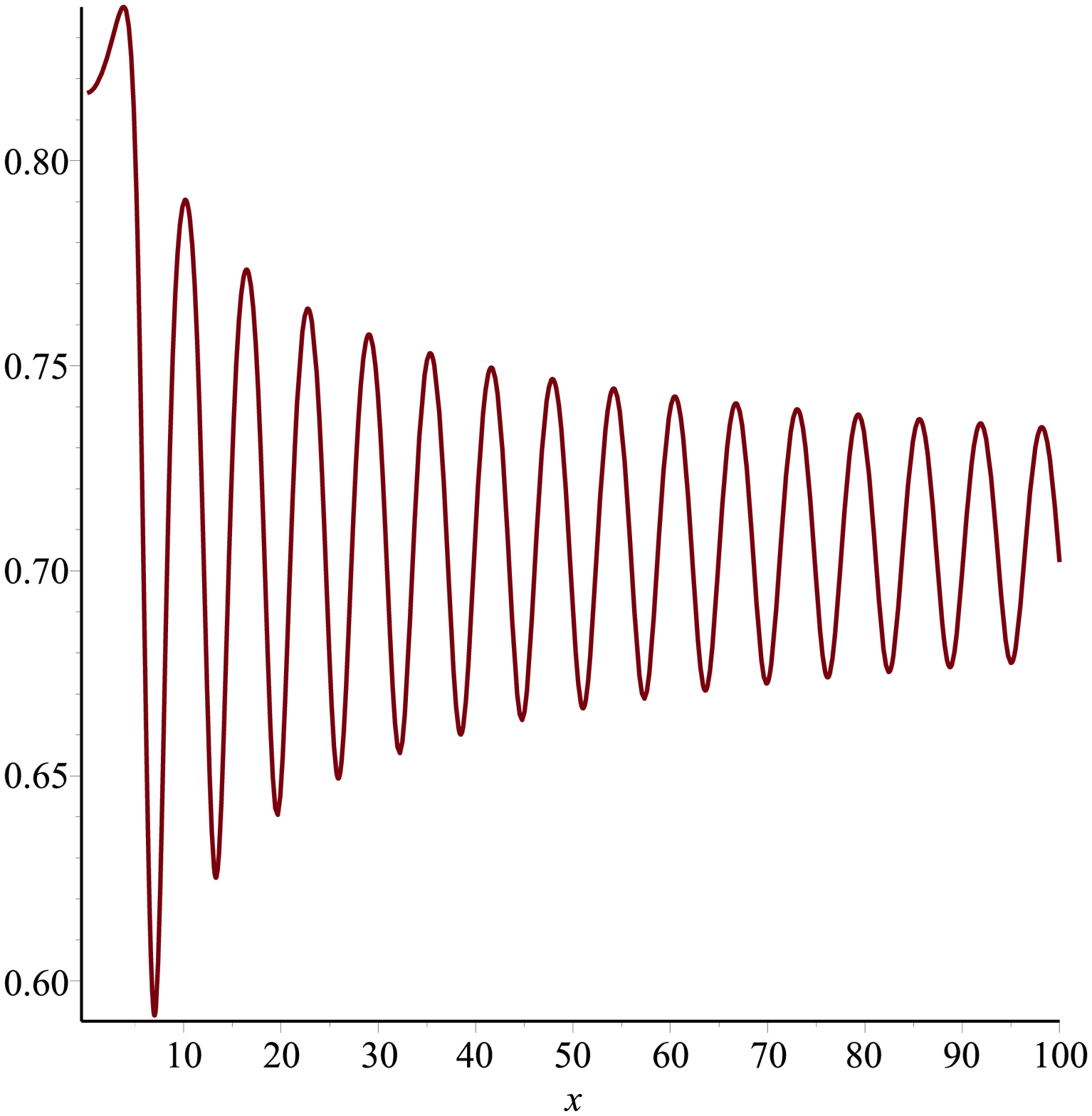}
  \hspace{0.5truein}
  \includegraphics[width=.350\linewidth]{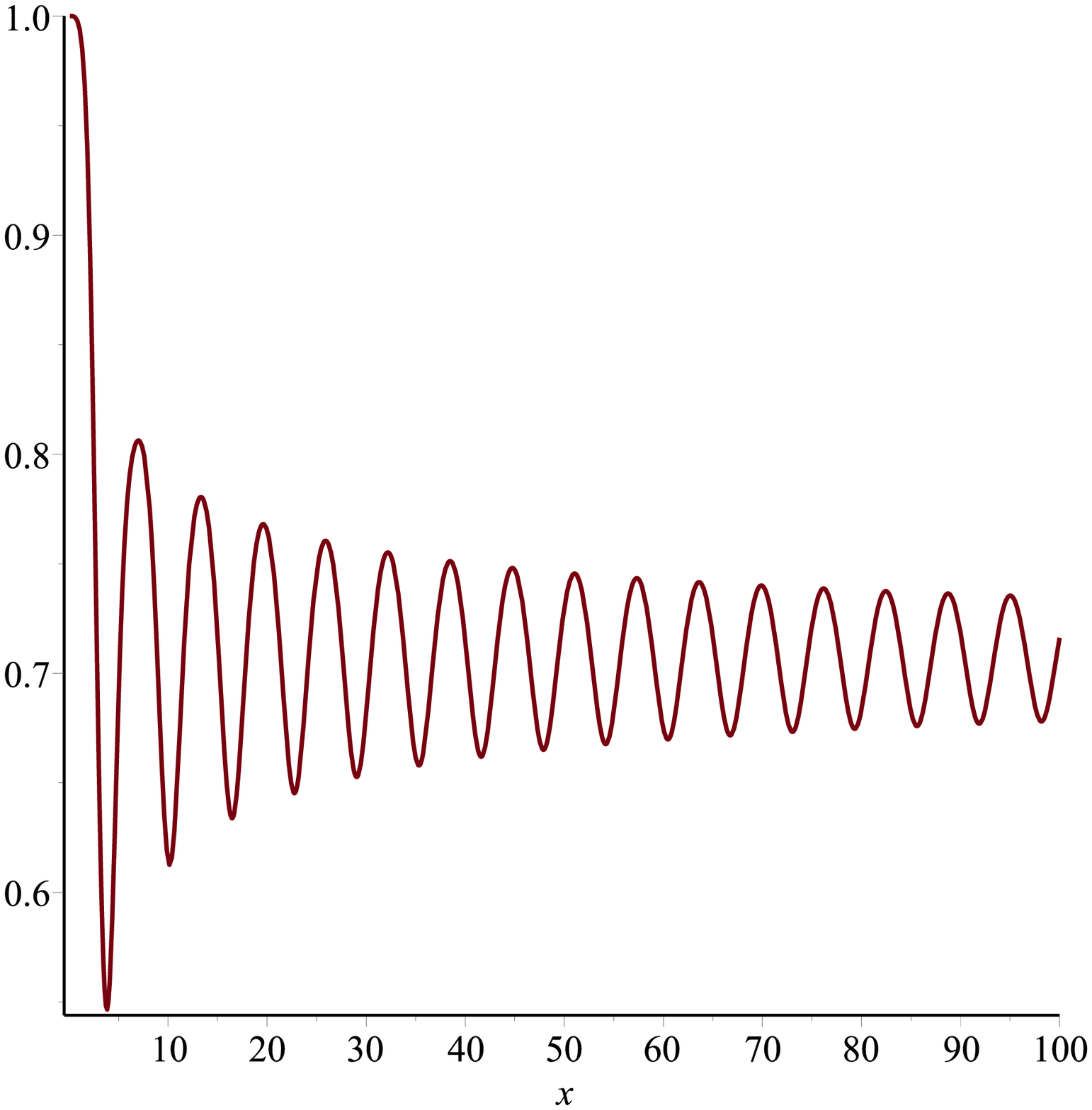}
\caption{Behaviour of the first and third terms in the lower bound for $r_{c,n}$ in the 2-dimensional case.}
\label{fig:test}
\end{figure}

 \section{Proof of Theorem \ref{1} for the general case}\label{eeee}
The proof of Theorem \ref{1} for two dimensions  can be generalized to higher dimensions without much difficulty.  It relies on properties of spherical harmonics in high dimensions that parallel those of the two dimensional case, and then some heavier notation. (The notation was the main reason for handling the two dimensional case first.) We shall sketch the main arguments in  the proof now.

Retaining the earlier notation, we need to define the
%
normalized spectral projection kernel 
\beqq
\Pi_n^d(x,y) &=& {\frac{s_d}{k^d_n}}\sum_{j=1}^{k^d_n}\phi_{j}^{n,d}(x) \phi_{j}^{n,d}(y), \\
&=&P_{n,d}(\Theta(x,y)), 
\eeqq
the second line following from \cite{AH}.


Following the same arguments as those that led to and follow from \eqref{e1}--\eqref{e3}, the pullback of the Euclidean metric is 
\begin{equation}\label{metricback}(i_n^d)^*(g_E)=P_{n,d}'(1)g_{S^d}=\frac{n(n+d-1)}d g_{S^d}.\end{equation}
and the  critical radius of $i_n^d(S^d)$, as a subset of $\real^\kdn$, is exactly the same as before, viz.\ as given by \eqref{eq:rcn}.
Once again, relying on rotation invariance, it suffices to study the local critical radius at  the image of the point $(0,0,\cdots, 1)$.

As before, moving to polar coordinates on $S^d$ we have   
 $$\Pi_n^d((0,\cdots,1),y)=P_{n,d}(\cos\theta), \,\,\,\,\,\theta\in [0,\pi].$$
Taking derivatives of the normalized kernel and evaluating them at $(0,\cdots, 1)$, the $d$-dimensional analogue of   \eqref{ppp} now reads 
\begin{equation}\label{ppppppp}\|p_{(0,\cdots,1)}(y)\|^2\ =\ \frac{[\d_{\theta_y}\Pi_n((0,\cdots,1),y)]^2 }{ \d_{\theta_x}\d_{\theta_y}\Pi_n(x,y)|_{x=y}}\ =\  \frac{[P_{n,d}'(\cos\theta)\sin\theta ]^2}{P_{n,d}'(1)} .   \end{equation}
Hence, we can rewrite \eqref{higherg}, now for the  critical radius of the higher dimensional immersion, as
\begin{equation}
\label{eq:rcnd}
r_{c,n}^d(S^d)=\inf_{\theta\in [0,\pi]} \frac{1-P_{n,d}(\cos \theta)}{\sqrt{2-2P_{n,d}(\cos\theta)-\frac{ [P_{n,d}'(\cos\theta)\sin\theta]^2}{P_{n,d}'(1) }}}.\end{equation}
We still have the following Hilb's asymptotic \cite{So}, 
\beqq
P_{n,d}(\cos\theta )
 &=&\Gamma\left(\frac d 2\right)\left[\frac12\left(n+\frac{d-1}2\right)\sin\theta\right]^{-\frac d2 +1}\left(\frac\theta{\sin\theta}\right)^{1/2}
\\
&& \qquad\qquad  \times  J_{\frac d 2 -1}\left(\left(n+\frac{d-1}2\right)\theta\right)\
+\ R_n(\theta),
\eeqq
where \begin{equation}
R_n(\theta)=
\begin{cases}
\theta^{d/2} O(n^{d/2-2}),   &0\leq \theta\leq   c/n\\
 \theta^{1/2}O(n^{-  3/2}),  &c/n\leq \theta \leq   \pi /2,
\end{cases}
\end{equation}
with  $c$ a large, $d$-dependent, constant, and  where $J_{\frac d 2-1}(\theta) $ is the 
 Bessel function
$$J_{\frac d 2-1}(\theta)=\sum_{j=0} ^\infty \frac{(-1)^j}{j!\Gamma(j+\frac d 2)}\left(\frac \theta 2\right)^{2j+\frac d 2-1}.$$

Again, following the  arguments  of the preceding section,  the global infimum is derived by considering in the subintervals $[0,c/n]$, $[c/n,n^{-3/4}]$ and $[n^{-3/4}, \pi/2]$. The infimum on the first two subintervals is expressed by the  rescaling limit of the Hilb's asymptotic of  the  Legendre polynomials $P_{n,d}(\cos\theta)$. When we rescale $\theta\to y/n$ in the Hilb's asymptotic, we obtain the limit
$$J_\infty^d(y)\ =\ \Gamma(\frac d 2)(\frac12 y)^{-\frac d2 +1} J_{\frac d 2 -1}( y)\ =\  \sum_{j=0} ^\infty \frac{(-1)^j\Gamma({\frac d 2})}{j!\Gamma(j+\frac d 2)}(\frac y 2)^{2j } $$
as $n\to \infty$. On the remaining  subinterval, $[n^{-3/4}, \pi/2]$, the  rapid decay  of $P_{n,d}(\cos\theta)$ and its derivative follow from standard properties of Bessel functions,  (see \eqref{J}), thus the infimum on this subinterval will tend to $1/{\sqrt 2}$, as $n\to\infty$. 

As before, combining arguments for the two cases  of $n$ add and $n$ even, 
%
we find the following lower bound for the critical radii $r_{c,n}^d$ as $n\to\infty$:
\beq
\label{eq:lbd}
&&\min\left\{ \inf_{y\in [0,\infty]} \frac{1-J_\infty^d(y)}{\sqrt{2-2J_\infty^d(y)-d[(J_\infty^d)'(y)]^2 }}   , \frac1{\sqrt 2}, 
\right.  \\ &&\qquad\qquad\qquad\qquad \left. \inf_{y\in [0,\infty]} \frac{1+J_\infty^d(y)}{\sqrt{2+2J_\infty^d(y)-d[(J_\infty^d)'(y)]^2 }}\right\},
\nonumber
\eeq
which completes the proof.

\section{Proof of Theorem  \ref{34}
and  Proposition \ref{prop:2}}
\label{ECTF}
We  break the proofs into three parts, starting  with the proof of  Theorem \ref{34}.

\subsection{The equivalence of mean Euler characteristics and exceedence probabilities}


The following lemma implies Theorem \ref{34}. It also sets up the relationship between exceedence probabilities and mean Euler characteristics, which we then evaluate in the following two subsections.

\begin{lemma}\label{lemmma}
Under the conditions of Theorem \ref{34}, and for all 
%
$0\leq \rho\leq \rho_d$, 
\beq\label{equ}
\E_{\mu_n^d}\left\{\chi\left(A_n^d(\sqrt {\frac{k^d_n}{s_d}} \cos\rho) \right)\right\}
&=&\frac1\kappa \mathbb P_{\mu^d_n} \left\{\sup_{S^d} \frac{\Phi^d_n(x)}{\sqrt {{k^d_n}/{s_d}}}>\cos\rho\right\}\\
&=& \frac{ V_{S^{k_n^d-1}}({\rm Tube}(i_n(S^d), \rho))}{\kappa s_{k_n^d-1}},   \nonumber
\eeq
where $\kappa$ is $1/2$ if $n$ is even and $1$ if $n$ is odd. 
\end{lemma}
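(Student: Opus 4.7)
The plan is to reduce both equalities to the fact that, under the spherical ensemble, the coefficient vector $a=(a_1,\dots,a_{\kdn})$ driving $\Pdn$ is uniformly distributed on $S^{\kdn-1}$, and then to use the critical radius bound from Theorem \ref{1} to force $\And(u)$ to have a very simple topological structure.

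First I would make the geometric reformulation. From \eqref{eq:rsh} and \eqref{id},
\begin{equation*}
\Pdn(x)\ =\ \sqrt{\kdn/s_d}\,\bigl\langle a,\,i_n^d(x)\bigr\rangle,
\end{equation*}
and since both $a$ and $i_n^d(x)$ lie on $S^{\kdn-1}$, the inner product equals $\cos$ of the geodesic distance on $S^{\kdn-1}$. With $u=\sqrt{\kdn/s_d}\cos\rho$ this yields the equivalence
\begin{equation*}
\sup_{x\in S^d}\Pdn(x)\,>\,u \ \iff\ a\in\mathrm{Tube}\bigl(i_n^d(S^d),\rho\bigr)\subset S^{\kdn-1}.
\end{equation*}
Since $a$ is uniform on $S^{\kdn-1}$, the second equality of \eqref{equ} follows at once, after multiplying by $1/\kappa$:
\begin{equation*}
\Pmnd\{\sup_{S^d}\Pdn>u\}\ =\ V_{S^{\kdn-1}}(\mathrm{Tube}(i_n^d(S^d),\rho))/s_{\kdn-1}.
\end{equation*}

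Next I would do the critical point analysis for the first equality, conditioning on $a$. Writing $f_a(x)\definedas\langle a,i_n^d(x)\rangle$, so that $\And(u)=\{f_a>\cos\rho\}$, a point $x\in S^d$ is critical for $f_a$ precisely when $a$ is orthogonal to $T_{i_n^d(x)}i_n^d(S^d)$, i.e.\ when $a$ lies on a spherical normal geodesic to $i_n^d(S^d)$ based at $i_n^d(x)$. Such a critical point lies in $\{f_a>\cos\rho\}$ exactly when this normal geodesic has length less than $\rho$. The reach bound $\rho\le\rho_d$ guaranteed by Theorem \ref{1} says precisely that the normal disks of radius $\rho$ to $i_n^d(S^d)$ are pairwise disjoint in $S^{\kdn-1}$, so any $a\in\mathrm{Tube}(i_n^d(S^d),\rho)$ belongs to a single such disk, over a unique nearest point $i_n^d(x_0)$. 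Pulling back through $i_n^d$, the critical points of $f_a$ in $\And(u)$ are therefore the point $x_0$ alone when $n$ is odd (since $i_n^d$ is an embedding), and the antipodal pair $\{x_0,-x_0\}$ when $n$ is even (since $i_n^d(-x)=i_n^d(x)$). Each is automatically a global maximum of $f_a$, since maximising $\langle a,\cdot\rangle$ over $i_n^d(S^d)$ is the same as finding the nearest point.

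Because $\{f_a>\cos\rho\}$ contains only strict local maxima of $f_a$ as critical points, standard Morse theory shows that each of its connected components is a topological $d$-ball containing exactly one such maximum, and hence contributes $1$ to the Euler characteristic. Therefore
\begin{equation*}
\chi(\And(u))\ =\ \begin{cases}1/\kappa & \text{if } a\in\mathrm{Tube}(i_n^d(S^d),\rho),\\ 0 & \text{otherwise,}\end{cases}
\end{equation*}
with $\kappa=1$ for $n$ odd and $\kappa=1/2$ for $n$ even, the second case being when $\sup\Pdn\le u$ and $\And(u)=\emptyset$. Taking expectation and using the first step,
\begin{equation*}
\Emnd\{\chi(\And(u))\}\ =\ \kappa^{-1}\Pmnd(a\in\mathrm{Tube})\ =\ \kappa^{-1}\Pmnd\{\sup_{S^d}\Pdn>u\},
\end{equation*}
which is the first equality in \eqref{equ}. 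The main obstacle I expect is the Morse step: one must verify rigorously that no unexpected critical points of $f_a$ sneak into the super-level set above $\cos\rho$, so that each component of $\And(u)$ indeed deformation-retracts onto its maximum along the gradient flow of $f_a$. This is exactly where the uniform reach bound $\rho\le\rho_d$ enters, but it should be argued carefully (for example by ruling out conjugate points along the normal geodesics of length $<\rho$).
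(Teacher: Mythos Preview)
Your argument is correct and follows the same overall architecture as the paper: rewrite $\Pdn(x)/\sqrt{\kdn/s_d}$ as $\langle a,i_n^d(x)\rangle$, identify the exceedence event with $a\in\mathrm{Tube}(i_n^d(S^d),\rho)$, and use the reach bound from Theorem~\ref{1} to control the topology of the excursion set, with the $\kappa$ factor coming from the two-to-one covering in the even case.

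The one place where you and the paper diverge is exactly the step you flagged. You argue via Morse theory: critical points of $f_a$ in $\{f_a>\cos\rho\}$ must sit at the foot of a normal geodesic of length $<\rho$ through $a$, the reach bound forces uniqueness of that foot in $i_n^d(S^d)$, and then gradient flow collapses each component onto its maximum. This is correct for almost every $a$ (which is all you need for the expectation), but it does require checking that $f_a$ is Morse and that no degenerate critical behaviour occurs. The paper sidesteps this entirely by invoking a cleaner geometric fact (cited from \cite{S}, Lemma~3.1): if $M$ is a compact submanifold of $N$ and $p\in N$, then the intersection of $M$ with the geodesic ball $B(p,\rho)$ is either empty or contractible whenever $\rho$ is below the reach of $M$. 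Applied with $M=i_n^d(S^d)$, $N=S^{\kdn-1}$, $p=a$, this gives immediately that $\{z:\langle a,i_n^d(z)\rangle>\cos\rho\}$ has Euler characteristic $0$ or $1$, with no Morse-theoretic bookkeeping. Your route is more explicit about \emph{why} the set is simple; the paper's route is shorter and avoids the genericity issue you were worried about.
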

\begin{proof}

We start by noting that by \eqref{eq:pL2} 
we can write
$$\frac{\Phi^d_n(x){}}{\sqrt {{k^d_n}/{s_d}}}\ =\langle a, i^d_n(x)\rangle  =\ \cos \Theta(a, i^d_n(x)),$$
with, as before $a=(a_{1},\cdots, a_{k^d_n})\in S^{k_n^d-1}$, and where $\Theta (x,y)$   is the angle between vectors $x,y\in S^{k_n^d-1}$.

We now note the fact (e.g.\ \cite{S},  Lemma 3.1) that if $M$ is a compact submanifold of a smooth manifold $N$, and $p\in N$, then the intersection between $M$ and a ball of radius $\rho$ around $p$  will either be empty or contractible, as long as $\rho$ is less than the reach of $M$.

Further, we know from Theorem \ref{1} that there is a uniform lower bound for the critical radius of the immersion $i_n^d(S^d)$ in $\mathbb R^{k_n^d}$. From this and a little spherical geometry it follows that the same is true, albeit with a different lower bound, for the critical radius of   $i_n^d(S^d)$ considered as a subset of $S^{k_n^d-1}$. Let $\rho_d$ denote this new lower bound.

Putting the last three paragraphs together, with $M=i_n^d(S^d)$ and $p=a\in N=S^{k_n^d-1}$, we have that 
 the set 
$$
\left\{i_n^d(z)\in S^{k_n^d-1}:\,\,   \langle a, i_n^d(z)\rangle>\cos\rho\right\}
$$
 is either empty or contractible for $0\leq \rho\leq \rho_d$. Hence, 
\beq
\label{equ2}
\kappa \E_{\mu_n^d}\left\{\chi\left(A_n^d( \sqrt {{k^d_n}/{s_d}}\cos\rho) \right)\right\} &=& \mathbb E_{\mu^d_n}\left\{\chi\left\{i_n^d(z)\in S^{k_n^d-1}:\langle a, i_n^d(z)\rangle>\cos\rho\right\}\right\}   \nonumber
\\ &=& \mathbb P_{\mu^d_n} \left\{\sup_z \langle a, i_n^d(z)\rangle >\cos\rho\right\},
\eeq
the factor of $\kappa$ on the right hand side coming from the fact that while $i_n^d$ is an embedding if $n$ is odd,  it identifies  antipodal points if $n$ is even. Consequently,  the Euler characteristic of the preimage on $S^d$ where  will be double that of the image when $n$ is even. This, obviously completes the proof of the lemma.
\end{proof}

As an aside, we note that \eqref{equ2} is also proven in \cite{TK}, although there the approach is to obtain expressions for the  the expected Euler characteristic and the probability separately, and then note  that they are identical.

\subsection{On tube formulae}
Returning to \eqref{equ2}, and noting that, under the spherical ensemble, $a$ is chosen uniformly on $S^{k_n^d-1}$, we have that we can write the final probability there as
\beq
\label{eq:PV}
\mathbb P_{\mu^d_n} \left\{\sup_z \langle a, i_n^d(z)\rangle >\cos\rho\right\} = \frac{ V_{S^{k_n^d-1}}({\rm Tube}(i_n(S^d), \rho))}{s_{k_n^d-1}},
\eeq
where, with a slight -- but space saving -- change of notation, $V_{S^N}$ is volumetric measure with respect to the round metric on $S^N$,   
\beq
{\rm Tube}\left(i_n(S^d), \rho\right)
\ \definedas \
\left\{ x\in S^{k^d_n-1} :\ \min_{y\in i_n(S^d)} d(x,y) \leq\rho\right\},
\eeq
and $d(x,y)$ is geodesic distance on the sphere. 

We now want to express the volume of the tube in \eqref{eq:PV} via Weyl's  tube formula  \cite{AT, G, W}, and so spend the remainder of this section setting up some notation and facts.

 Given an 
$m$-dimensional  Riemannina submanifold $(M,g)$ of  $S^{N-1}$, the volume of  a tube around $M$ of radius $\rho$ less than its critical radius,   is given by 
%
(Theorem 10.5.7 in \cite{AT}), 
\begin{equation}\label{tubesphere2} V_{S^{N-1}}(\mbox{Tube}(M, \rho))=\sum_{j=0}^m f_{N,j}(\rho) \mathcal L_j(M)
\end{equation} where \begin{equation}\label{funtions} f_{N,j}(\rho)=\sum_{k=0}^{[\frac j2]}(-4\pi)^{-k}\frac 1{k!}\frac{j!}{(j-2k)!}G_{j-2k,N-1+2k-j}(\rho)
 \end{equation}
 and
  \begin{equation}G_{a,b}(\rho)=\frac{b\pi^{b/2}}{\Gamma(\frac b2+1)}\int_0^\rho \cos^a(r)\sin^{b-1}(r)dr. \end{equation}
The Lipshitz-Killing curvatures $\mathcal L_j(M)$ are given by
\begin{equation}\label{curvatures-old} \mathcal L_j =\begin{cases}
\frac{(-2\pi)^{-(m-j)/2}}{(\frac{m-j}2)!}\int_M \mbox{Tr}(R^{(m-j)/2})\, dV_g, \,\,\,\,\,\,m-j\,\,\mbox{even}\\
0,\,\,\,\,\,\,\,\,\,\,\,\,\,\,\,\, \,\,\,\,\,\,\,\,\,\,\,\,\,\,\,\,\,\,\,\,\,\,\,\,\, \,\,\,\,\,\,\,\,\,\,\,\,\,\,\,\,\,\,\,\,\,\,\,\,\,\,\,\,\,\,\,\,\,\,\,\,\,\,\,\,\,\,\,\,\,\,\,\, m-j\,\,\mbox{odd,}
\end{cases}
\end{equation}
where $R$ is the curvature tensor. In general, $\lips_m(M)=V_g(M)$ is the volume of $M$ and $\lips_0(M)=\chi(M)$ is its Euler characteristic.

For two dimensional surfaces of volume $V_g(M)$ and Euler characteristic $\chi(M)$, embedded in $S^{N-1}$, the tube formula
simplifies to 
\beq
\label{tititi} &&V_{S^{N-1}}(\mbox{Tube}(M, \rho))=\frac{2\pi^{(N-3)/2}}{\Gamma(\frac{N-3}2)}
\\
&&\times \int_0^\rho\sin ^{N-4}(r)\left\{ V_g(M)\left(1-\frac{N-2}{N-3}\sin^2(r)\right)+\frac{2\pi\chi(M) \sin^2 (r)}{N-3}\right\} \, dr.
\nonumber
\eeq

%
%

One final fact that we shall need for later is the value of the Lipshitz-Killing curvatures for spheres. These are
\beq
\label{curvatures}
 \mathcal L_j\left(S^{N-1}\right) =\begin{cases}
2{N-1 \choose j}\frac{s_N}{s_{N-j}},
 &N-1-j\,\,\mbox{even,}\\
0,  &N-1-j\,\,\mbox{odd.}
\end{cases}
\eeq

\subsection{Proof of Proposition \ref{prop:2}.}
The proof works by applying the tube formula 
 \eqref{tubesphere2}  to the equivalence \eqref{eq:PV}.

We tackle the notionally easier case for $S^2$ first, thus proving Corollary \ref{tjm2} directly.  
Then by \eqref{tititi},  for  the  surface $i_n(S^2)$ in the ambient space $S^{2n+1}$, we have  
\beq
\label{eeddd}&&
V_{S^{2n}}(\mbox{Tube}(i_n^d(S^d), \rho))/s_{2n}\\  && \nonumber  
\   = 
\left(
\frac{{2\pi^{n-1}}/{\Gamma(n-1)}}{{2\pi^{n+\frac 12}}/{\Gamma(n+\frac12)}}
\right) 
 \int_0^\rho\sin ^{2n-3}(r)\left\{ V(i_n(S^2))\left(1-\frac{2n-1}{2n-2}\sin^2r\right)
 \right. \\ &&\qquad\qquad\qquad\qquad \qquad\qquad\qquad\qquad\qquad  \left. 
 +\frac{2\pi\chi(i_n(S^2)) \sin^2 (r)}{2n-2}\right\} \,dr.   \nonumber
\eeq
Recall (cf.\  \eqref{metric})  that  the pullback of the Euclidean metric is $((n^2+n)/2)g_{S^2}$. If we combine this with the fact that $i_n(S^2) \cong S^2$ for $n$ odd and $i_n(S^2) \cong \mathbb{R}P^2$ for $n$ even, we have 
\beqq
V(i_n(S^2))&=&2(n^2+n)\pi,\qquad \chi(i_n(S^2))\ =\ 2,\qquad \mbox{for odd}\ n,\\
V(i_n(S^2))&=&(n^2+n)\pi, \qquad\ \ \chi(i_n(S^2))\ =\ 1,\qquad  \mbox{for even}\ n.
   \eeqq
  Substituing this into  \eqref{equ} and noting \eqref{eeddd} suffices to prove   
   Corollary \ref{tjm2}.

For the general, higher dimensional cases,   \eqref{metricback} gives us that 
$$(i_n^d)^*(g_E)=P_{n,d}'(1)g_{S^d},$$
which implies that the curvature tensor of the pullback $(i_n^d)^*(g_E)$ is $[P_{n,d}'(1)]^{-1}R_{g_{S^d}}$ where $R_{g_{S^d}}$ is the curvature tensor of the round metric $g_{S^d}$. Thus $R^{(d-j)/2}(S^d)$ is rescaled to be  
$$[P_{n,d}'(1)]^{(j-d)/2}R^{(d-j)/2}(S^d),
$$
 and the volume form is rescaled to 
 $$
 dV_{(i_n^d)^*(g_E)}=\kappa [P_{n,d}'(1)]^{d/2}dV_{g_{S^d}},
 $$
  where a factor of $\kappa$ appears since the measure on $\mathbb RP^d$ induced from $S^d$ is half of that on $S^d$.  Hence, by definition of the Lipschitz-Killing curvatures in \eqref{curvatures},  the $j$-th Lipschitz-Killing curvature of the pullback metric which involves the integration on $i_n^d(S^d)$ will be rescaled to be  $\kappa[P_{n,d}'(1)]^{j/2}\mathcal L_j(S^d)$.
  Consequently,  
\begin{equation}\label{halfd}\frac{V_{S^{k_n^d-1}}(\mbox{Tube}(i_n^d(S^d), \rho))}{s_{k_n^d-1}}=\frac {\kappa} {s_{k_n^d-1}}\sum_{j=0}^d f_{k_n^d,j}(\rho) [P_{n,d}'(1)]^{j/2}\mathcal L_j(S^d),\end{equation}
which, on combining \eqref{equ} and \eqref{halfd}, completes the proof of Theorem \ref{34}.

\section{Some closing comments}
\label{CLOSING}
To conclude, we want to connect our results to some other recent ones, as well as pointing out some interesting open questions.

Given a Riemannian manifold $M$,  \cite{AKTW}
studied the random map 
\begin{equation}\label{robert}i_k: M\to \mathbb R^k,\,\,\,\, x\to k^{-1/2}\left(f_1, f_2,\dots, f_k\right)\end{equation}
where the $f_j$ were independent and identically distributed  copies of  a smooth,  mean zero, unit variance, Gaussian process  $f$. For $k$ large enough, the $i_k$ become embeddings. It was shown that, as $k\to\infty$,  the critical radius of the embedded manifold $i_k(M)$  converged, almost surely,  to a constant known from Gaussian excursion theory, and which depended  on a  Riemannian metric on $M$  induced by the Gaussian process $f$. 

Consider an analogue  of \eqref{robert} in which we replace $f$ by  Gaussian  spherical harmonics on $S^d$ of level $n$. 
That is, we take for $f$ the $\Pdn$ in the form of \eqref{eq:rsh}, but with the $a_j$  standard normal variables.  Note that, as $n\to\infty$, we lose smoothness, and so leave the setting of \cite{AKTW}. 

Consider the random map
$$i_{k,d}^{(n)}: S^d\to \mathbb R^k,\,\,\,x\to \frac 1{\sqrt k}\left(f_1^{(n)},\cdots, f_k^{(n)}\right),$$
where  the $f_j^{(n)}$ are independent and identically distributed  copies of $\Pdn$. 
When $k$ is large enough, $i^{(n)}_{k,d}$ is still an embedding. However, as opposed to the setting \eqref{robert}, the interesting problem now is  the decay rate of the critical radius of the embedded sphere as $n\to\infty$, but with fixed $k$, large enough. The method used in \cite{AKTW} highly depends on a  central limit theorem  as $k\to\infty$, and so their method is not applicable in this problem. The generic behavior of the critical radius of $i_{k,d}^{(n)}(S^d)$ as $n\to\infty$ is unclear.

Another problem, more closely related to what we have studied here, is to understand the critical radius for more general Riemannian manifolds. That is, given a $d$-dimensional Riemannian manifold $(M, g)$,   consider the eigenspace $$\mathcal H^d_{[\lambda,\lambda+1]}:=\{\phi: \, \Delta_g \phi=-\tilde\lambda\phi, \,\, \tilde \lambda \in [\lambda,\lambda+1] \},$$
for large $\lambda$. Then  choose $\{\phi_1, \cdots, \phi_{k^d
_\lambda}\}$ as the orthogonal basis  of $\mathcal H^d_{[\lambda,\lambda+1]}$
and define the  immersion, 
\begin{equation}\label{ilam}i^d_\lambda:\,\,\, M\to \mathbb R^{k^d_\lambda},\,\,\, x\to \left(k^d_\lambda\right)^{-1/2}\left(\phi_{1},\dots, \phi_{k^d_\lambda}\right),\end{equation}
where $k^d_\lambda$ is the dimension of $\mathcal H^d_{[\lambda,\lambda+1]}$. 

This map is not new, and was considered by Zelditch  in \cite{Z}, for  Zoll and aperiodic manifolds. He  obtained the leading order terms of the spectral projection kernel and its derivatives, from which he was able to derive asymptotics for the distribution of zeros of Gaussian random waves by the classical Kac-Rice formula. 

In the results of the current paper, our computations regarding the critical radius for the  immersion $i_n^d(S^d)$ relied on the fact that all the information of the immersion $i_n^d$ \eqref{id}
is contained in the spectral projection kernels. To be more precise, we needed the leading expansion and the rescaling limit of the spectral projection kernel and its derivatives up to order two. 
It seems that our method can be generalized to the case of Zoll  and aperiodic manifolds.  It is well known that the behavior of eigenfunctions highly depends on the dynamical system of the manifolds \cite{Ze3}, and it should be very interesting to study the relation between the critical radius of $i^d_\lambda(M)$
 and the dynamical system.
 We postpone these questions for  further investigation.

\section*{Acknowledgements}

We  are grateful to  Sunder Ram Krishnan for many useful discussions in the early stages of our research.

\end{document}